\documentclass[11pt]{article}
\usepackage{amssymb,amsmath,amsthm,hyperref,verbatim,fullpage}

\newtheorem{theorem}{Theorem}

\newcommand{\F}{\mathcal{F}}
\newcommand{\G}{\mathcal{G}}

\newcommand{\Int}{\mathrm{Int}}
\newcommand{\ucstar}{\textrm{UC}^*}
\begin{document}
\title{A note on transitive union-closed families.}
\author{James Aaronson\footnote{Mathematical Institute, University of Oxford.},\, David Ellis\footnote{School of Mathematics, University of Bristol.}\, and Imre Leader\footnote{Department of Pure Mathematics and Mathematical Statistics, University of Cambridge.}}
\date{17th October 2020}
\maketitle

\begin{abstract}
We show that the Union-Closed Conjecture holds for the union-closed family generated by the cyclic translates of any
fixed set.
\end{abstract}

\section{Introduction}

If $X$ is a set, a family $\mathcal{F}$ of subsets of $X$ is said to be {\em union-closed} if the union of any two sets in $\F$ is also in $\F$. The celebrated Union-Closed Conjecture (a conjecture of Frankl \cite{duffus}) states that if $X$ is a finite set and $\F$ is a union-closed family of subsets of $X$ (with $\F \neq \{\emptyset\}$), then there exists an element $x \in X$ such that $x$ is contained in at least half of the sets in $\F$. Despite the efforts of many researchers over the last forty-five years, and a recent Polymath project \cite{polymath} aimed at resolving it, this conjecture remains wide open. It has only been proved under very strong constraints on the ground-set $X$ or the family $\F$; for example, Balla, Bollob\'as and Eccles \cite{bbe} proved it in the case where $|\F| \geq \tfrac{2}{3} 2^{|X|}$; more recently, Karpas \cite{karpas} proved it in the case where $|\F| \geq (\tfrac{1}{2}-c)2^{|X|}$ for a small absolute constant $c >0$; and it is also known to hold whenever $|X| \leq 12$ or $|\F| \leq 50$, from work of Vu\v{c}kovi\'c and \v{Z}ivkovi\'c \cite{vz} and of Roberts and Simpson \cite{rs}. We note that Reimer \cite{reimer} proved that the average size of a set in an arbitrary finite union-closed family $\F$ is at least $\tfrac{1}{2} \log_2(|\F|)$; this yields (by averaging) a good approximation to the Union-Closed Conjecture in the case where $\F$ is large, e.g.\ it implies that there is an element contained in at least an $\Omega(1)$-fraction of the sets in $\F$, in the case where $|\F| = 2^{\Omega(n)}$.

In this note, we prove the conjecture in the special case where $X$ is $\mathbb{Z}_n$, the cyclic group of order $n$, and $\mathcal{F}$ consists of all unions of cyclic translates of some fixed set. This is a question asked in the Polymath project \cite{polymath}.
\begin{theorem}
\label{thm:cyclic}
Let $n \in \mathbb{N}$, and let $R \subset \mathbb{Z}_n$ with $R \neq \emptyset$. Let $\mathcal{F}= \{A+R:\ A \subset \mathbb{Z}_n\}$ be the family of all unions of cyclic translates of $R$. Then the average size of a set in $\mathcal{F}$ is at least $n/2$. In particular, the Union-Closed Conjecture holds for $\mathcal{F}$.
\end{theorem}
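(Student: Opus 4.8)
The plan is to reduce the averaging statement to a containment statement via the symmetry of the construction, then to localise that statement via a partition of $\F$. First I would record that $\F$ is invariant under every translation of $\mathbb{Z}_n$, since $t+(A+R)=(t+A)+R$; hence translation by $t$ permutes $\F$, and the number of members of $\F$ containing a given point is independent of that point. Writing the average size as $\tfrac{1}{|\F|}\sum_{S\in\F}|S|=\tfrac{1}{|\F|}\sum_{x}|\{S\in\F:x\in S\}|=\tfrac{n}{|\F|}\,|\{S\in\F:0\in S\}|$, the target $n/2$ becomes exactly the assertion that at least half the members of $\F$ contain $0$. Since $\F$ is unchanged if $R$ is replaced by a translate $R+c$, I may assume $0\in R$.

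Next I would partition $\F$ according to the trace of $S$ on the complement $\overline{R}=\mathbb{Z}_n\setminus R$: for $w\subseteq\overline{R}$ set $\F_w=\{S\in\F:S\cap\overline{R}=w\}$. These are precisely the fibres of the map $S\mapsto S\cup R$ (note $S\subseteq w\cup R$ forces $S\cup R=w\cup R$), so they partition $\F$, and any nonempty block has $w\cup R=S\cup R\in\F$. On a nonempty block the bijection $S\mapsto Y:=S\cap R$ identifies $\F_w$ with a family $\mathcal{Y}_w=\{S\cap R: S\in\F_w\}\subseteq \mathcal{P}(R)$, which is \emph{union-closed} (because $S,S'\in\F_w\Rightarrow S\cup S'\in\F_w$ and $(S\cup S')\cap R=Y\cup Y'$) and has maximum element $R$ (as $w\cup R\in\F$). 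Crucially $0\in S\iff 0\in Y$ since $0\in R$, so it suffices to prove the following local claim: for every $w$, at least half the members of $\mathcal{Y}_w$ contain $0$.

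The local claim is where the real content lies, and it genuinely uses the sumset origin: a general union-closed family whose top element is the whole ground set need \emph{not} have a prescribed point lying in half its members (e.g.\ $\{\emptyset,\{2\},\{1,2\},\{0,1,2\}\}$). The extra structure I would exploit is that, for every translate $a+R\subseteq w\cup R$, the family $\mathcal{Y}_w$ is closed under $Y\mapsto Y\cup\big((a+R)\cap R\big)$; in particular the translate through $0$ supplies $0$-containing supersets. Concretely I would build an injection from the $0$-free members of $\mathcal{Y}_w$ to the $0$-containing ones: the map $Y\mapsto Y\cup\{0\}$ lands back in $\mathcal{Y}_w$ whenever some translate of $R$ meeting $0$ has all of its $R$-coordinates already inside $Y\cup\{0\}$, and the remaining ``exceptional'' $Y$ would be routed to larger $0$-sets (in the extreme, to the top $R$).

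The main obstacle I anticipate is verifying that this assignment can be made \emph{injective} across the whole block simultaneously. The difficulty is multiplicity: distinct generating sets $A$ can produce the same member $A+R$, so neither the $0$-free members nor their chosen targets are controlled by a single translate, and a naive common rule collapses several sets together. I would therefore phrase the existence of the injection as a bipartite matching problem on $\mathcal{Y}_w$ (join the $0$-free $Y$ to all $0$-containing supersets) and attempt to verify Hall's condition using the union-closedness of $\mathcal{Y}_w$ together with the inside-part closure above; establishing this Hall inequality is, I expect, the crux of the argument. Summing the resulting injections over the blocks $w$ then yields $|\{S\in\F:0\in S\}|\ge|\F|/2$, and hence the theorem, with equality traceable to the case where each $\mathcal{Y}_w$ is ``balanced,'' matching the known equality instances in which $R$ is a coset of a subgroup.
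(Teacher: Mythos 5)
Your opening reduction is correct and clean: since $t+(A+R)=(t+A)+R$, the family $\F$ is invariant under all translations of $\mathbb{Z}_n$, so every point lies in the same number of sets of $\F$, and the average-size bound is therefore equivalent to the assertion that at least half the sets in $\F$ contain $0$. The normalisation $0\in R$ and the partition into blocks $\F_w$, each identified with a union-closed family $\mathcal{Y}_w\subseteq\mathcal{P}(R)$ with top element $R$, are also fine. The problem is that everything then rests on your ``local claim'' --- that in \emph{every} nonempty block at least half the members contain $0$ --- and you do not prove it. You correctly observe that it cannot follow from union-closedness of $\mathcal{Y}_w$ alone; you propose an injection from the $0$-free members to the $0$-containing members; you identify injectivity as the difficulty; and you then defer the matter to ``attempting to verify Hall's condition'', explicitly calling that unestablished inequality ``the crux of the argument''. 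That is exactly the point at which a proof is required and none is given: no Hall-type inequality is verified, and no mechanism is exhibited by which the sumset structure forces one. Note also that the local claim is a strictly stronger, block-by-block refinement of the theorem; it may well be true, but nothing in the proposal establishes it, and pointwise refinements of averaging statements of this kind are precisely where such arguments tend to break.

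For comparison, the paper's proof takes a completely different and much shorter route, with no reduction to the point $0$, no use of transitivity, and no local statement. It defines $f(S)=-(G\setminus\Int_R(S))$, where $\Int_R(S)=\{x:\ x+R\subset S\}$, and checks three things: that $f(S)=(-(G\setminus S))+R$, so $f$ maps into $\F$; that $|S|+|f(S)|\ge |G|$, because for any $r\in R$ the map $x\mapsto x+r$ injects $\Int_R(S)$ into $S$; and that $f$ is injective on $\F$, because $N_R(\Int_R(A+R))=A+R$, so any $S\in\F$ can be recovered from $f(S)$. Summing $|S|+|f(S)|$ over the resulting bijection of $\F$ with itself gives the average-size bound at once. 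If you want to salvage your approach you must actually prove the local claim (which looks genuinely hard, and harder than the theorem itself); otherwise the block decomposition should be abandoned in favour of a single global pairing of small sets with large ones, which is what the paper does.
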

Our proof is surprisingly short. In fact, we establish the following slightly more general result.
\begin{theorem}
\label{thm:main}
Let $(G,+)$ be a finite Abelian group, and let $R \subset G$ with $R \neq \emptyset$. Let $\mathcal{F} = \{A+R:\ A \subset G\}$ be the family of all unions of translates of $R$. Then the average size of a set in $\mathcal{F}$ is at least $|G|/2$. In particular, the Union-Closed Conjecture holds for $\mathcal{F}$. 
\end{theorem}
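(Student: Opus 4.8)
The plan is to sidestep the highly non-uniform fibres of the map $A \mapsto A+R$ by passing to \emph{maximal representatives}. For $S \subseteq G$ write $A^*(S) = \{g \in G : g+R \subseteq S\}$; this is the largest $A$ with $A+R \subseteq S$, and when $S \in \F$ one has $A^*(S)+R = S$. Hence $S \mapsto A^*(S)$ is a bijection from $\F$ onto $\mathcal{C} := \{A^*(S) : S \in \F\}$, with inverse $A \mapsto A+R$. Write $n = |G|$, $m = |\F| = |\mathcal{C}|$, and let $\alpha$ denote the average size of a member of $\F$. The one completely trivial ingredient is that $|A+R| \geq |A|$ for every $A \subseteq G$: choosing any $r \in R$ (possible as $R \neq \emptyset$), the translate $A+r \subseteq A+R$ has size $|A|$. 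Summing over $\mathcal{C}$ and using the bijection gives
\[
\alpha m = \sum_{S \in \F} |S| = \sum_{A \in \mathcal{C}} |A+R| \;\geq\; \sum_{A \in \mathcal{C}} |A|.
\]

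It therefore suffices to show that the mean size of a maximal representative is exactly $n-\alpha$, since then $\alpha \geq n-\alpha$. This is where the group structure enters, through a complementation duality. Let $\F' = \{A + (-R) : A \subseteq G\}$ be the analogous family built from $-R$. I claim complementation maps $\mathcal{C}$ bijectively onto $\F'$; granting this, it is a size-reversing bijection, so $\sum_{A \in \mathcal{C}} |A| = nm - \sum_{T \in \F'} |T|$. Moreover $x \mapsto -x$ carries $A+R$ to $(-A)+(-R)$, so it is a size-preserving bijection $\F \to \F'$; hence $|\F'| = m$ and the average size in $\F'$ is again $\alpha$, giving $\sum_{T \in \F'} |T| = \alpha m$ and so $\sum_{A \in \mathcal{C}} |A| = (n-\alpha)m$, as required. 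Thus $\alpha \geq n/2$, and the Union-Closed Conjecture for $\F$ follows by averaging over the ground set: since $\sum_{x \in G} |\{S \in \F : x \in S\}| = \sum_{S}|S| \geq \tfrac{n}{2}m$, some element lies in at least $m/2$ of the members.

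The crux of the argument --- and the step I would check most carefully --- is the duality $\{A^c : A \in \mathcal{C}\} = \F'$. Its heart is the one-line complementation identity: for any $S$,
\[
A^*(S)^c = \{g : g+R \not\subseteq S\} = \bigcup_{r \in R}(S^c - r) = S^c + (-R).
\]
Taking $S \in \F$, so that $A = A^*(S)$ ranges over $\mathcal{C}$, this shows $A^c = S^c + (-R) \in \F'$, i.e.\ $\{A^c : A \in \mathcal{C}\} \subseteq \F'$; and since $S \mapsto A^*(S)^c$ is injective with $|\F'| = m = |\mathcal{C}|$, the inclusion is forced to be an equality. Everything apart from this identity is bookkeeping, so I expect the only genuine content to be verifying the duality cleanly (the degenerate extremes, such as $R=G$ giving $\F=\{\emptyset,G\}$ with average exactly $n/2$, are handled by the same argument and need no separate treatment).
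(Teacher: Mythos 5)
Your proof is correct and is essentially the paper's argument in a light disguise: your maximal representative $A^*(S)$ is the paper's $R$-interior $\Int_R(S)$, your inequality $|A+R|\ge |A|$ is the paper's $|\Int_R(S)|\le |S|$, and your complementation identity $A^*(S)^c = S^c+(-R)$ is exactly the paper's identity $-(G\setminus \Int_R(S)) = (-(G\setminus S))+R$ after composing with $x\mapsto -x$. The only difference is organisational: the paper composes complementation with negation into a single size-reversing self-bijection $f$ of $\F$, whereas you pass through the auxiliary family $\F'=\{A+(-R)\}$ and use negation separately to see that $\F'$ has the same size distribution as $\F$.
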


We remark that it is possible to deduce a slightly weaker form of Theorem \ref{thm:main} from a theorem of Johnson and Vaughan (Theorem 2.10 in \cite{jv}). In fact, the result of Johnson and Vaughan, after applying a quotienting argument,
yields that there is an element of $G$ contained in at least $(|\F|-1)/2$ of the sets in $\F$. (Since $\F$ may have odd size, for example when $G$ is $\mathbb{Z}_3$ and $R=\{0,1\}$, this is not quite enough to deduce Theorem \ref{thm:main}.) We are indebted to Zachary Chase for bringing this paper of Johnson and Vaughan to our attention.

A short explanation of our notation and terminology is in order. As usual, if $G$ is an Abelian group, and $A,B \subset G$, we write $A+B = \{a+b:\ a\in A,\ b \in B\}$ for the {\em sumset} of $A$ and $B$. Similarly, if $a \in G$ and $B \subset G$, we define $a+B = \{a+b:\ b \in B\}$. For any $x \in G$, we let $-x$ denote the inverse of $x$ in $G$, and for any set $A \subset G$, we let $-A = \{-a:\ a \in A\}$. We say a subset $A \subset G$ is {\em symmetric} if $A = -A$. If $X$ is a finite set, we write $\mathcal{P}(X)$ for the power-set of $X$. 

\section{Proof of Theorem \ref{thm:main}.}

Before proving Theorem \ref{thm:main}, we introduce some useful concepts and notation. Let $G$ be a fixed, finite Abelian group, and let $R \subset G$ be fixed. For any set $A \subset G$, we define its {\em $R$-neighbourhood} to be
$$N_{R}(A) := A+R,$$
and its {\em $R$-interior} to be
$$\Int_{R}(A): = \{x \in G:\ x+R \subset A\}.$$
We note that, if $R$ is symmetric and contains the identity element $0$ of $G$, then the $R$-neighbourhood of any set $A$ is precisely the graph-neighbourhood of $A$ in the Cayley graph of $G$ with generating-set $R \setminus \{0\}$, and similarly, the $R$-interior of $A$ is precisely the graph-interior of $A$ with respect to this Cayley graph.

\begin{proof}[Proof of Theorem \ref{thm:main}.]
Let $G$ be a fixed, finite Abelian group and let $R \subset G$ be a fixed, nonempty subset of $G$. Let
$$\mathcal{F} = \{A+R:\ A \subset G\}$$
be the union-closed family consisting of all unions of translates of $R$.

We define a function $f:\mathcal{P}(G) \to \mathcal{P}(G)$ by
$$f(S) = -(G \setminus \Int_R(S))\quad \text{for all }S \subset G.$$
It is clear that for any set $S \subset G$, $|\Int_{R}(S)| \leq |S|$, since for any element $r \in R$, the function $x \mapsto x+r$ is an injection from $\Int_R(S)$ into $S$. Hence,
\begin{equation}
\label{eq:sum}
|S|+|f(S)| \geq |G|\quad \text{for all } S \subset G.
\end{equation}
Next, we observe that
\begin{equation}
\label{eq:image}
f(S) = (-(G \setminus S))+R\quad \text{for all } S \subset G.\end{equation}
Indeed, for any $x \in G$, it holds that $x \in f(S)$ iff $-x \notin \Int_R(S)$ iff $(-x +R) \cap (G \setminus S) \neq \emptyset$ iff $x \in (-(G \setminus S))+R$. It follows that $f(\mathcal{P}(G))\subset \mathcal{F}$.

Finally, we observe that the restriction $f|_{\mathcal{F}}$ is an injection. This might seem surprising at first glance, but it follows immediately from the fact that
\begin{equation}
\label{eq:eq}
N_{R}(\Int_{R}(A+R)) = A+R \quad \text{for all } A \subset G.
\end{equation}
To see (\ref{eq:eq}), let $S = A+R$ and observe that $N_{R}(\Int_{R}(S)) \subset S$ holds by definition (in fact for any set $S$). On the other hand, if $S = A+R$, then we have $A \subset \Int_{R}(S)$ and therefore $S=A+R \subset N_{R}(\Int_R(S))$. Hence, $S = N_{R}(\Int_{R}(S))$, as required.

Putting everything together, we see that $f|_{\mathcal{F}}$ is a bijection from $\mathcal{F}$ to itself and satisfies
$$|S|+|f(S)| \geq |G|\quad \text{for all }S \in \mathcal{F}.$$
Therefore,
$$\frac{1}{|\mathcal{F}|}\sum_{S \in \mathcal{F}} |S| = \frac{1}{2|\mathcal{F}|} \sum_{S \in \F}(|S|+|f(S)|) \geq \frac{1}{2|\mathcal{F}|}\sum_{S \in \F} |G| = |G|/2,$$
proving the first part of the theorem. It follows that
$$\frac{1}{|G|} \sum_{x \in G} \frac{|\{S \in \F:\ x \in \F\}|}{|\F|} = \frac{1}{|G|} \frac{1}{|\F|} \sum_{S \in \F}|S| \geq 1/2,$$
so by averaging, there exists $x \in G$ such that at least half the sets in $\F$ contain $x$, and so the Union-Closed Conjecture holds for $\mathcal{F}$.
\end{proof}

\end{document}